\documentclass[11pt]{article}
\usepackage{amsmath, amsthm, amscd, amsfonts, amssymb,color, mathtools, hyperref}
\usepackage{authblk}
\usepackage{fancyhdr,lipsum}

\textheight 22.5truecm \textwidth 14.5truecm
\setlength{\oddsidemargin}{0.35in}\setlength{\evensidemargin}{0.35in}

\setlength{\topmargin}{-.5cm}

\newtheorem{theorem}{Theorem}[section]
\newtheorem{lemma}[theorem]{Lemma}

\newtheorem{corollary}[theorem]{Corollary}
\theoremstyle{definition}
\newtheorem{definition}[theorem]{Definition}
\newtheorem{example}[theorem]{Example}
\theoremstyle{remark}
\newtheorem{remark}[theorem]{Remark}
\numberwithin{equation}{section}
\newcommand{\reals}{\mathbb{R}}

\numberwithin{equation}{section}

\title{A CHARACTERIZATION OF NONNEGATIVITY RELATIVE TO PROPER CONES}
\author[$*$]{Chandrashekaran Arumugasamy}
\author[$**$]{Sachindranath Jayaraman}
\author[$**$]{Vatsalkumar N. Mer}
\affil[$*$]{Department of Mathematics,  School of Mathematics and Computer Sciences,\\
Central University of Tamil Nadu, Neelakudi, Kangalancherry,\\
Thiruvarur - 610 101, Tamilnadu, India}

\affil[$**$]{School of Mathematics, Indian Institute of Science Education and Research Thiruvananthapuram,\\ 
Maruthamala P.O., Vithura\\ Thiruvananthapuram -- 695 551, Kerala, India}

{
    \makeatletter
    \renewcommand\AB@affilsepx{: \protect\Affilfont}
    \makeatother

    \affil[ ]{Email ids}

    \makeatletter
    \renewcommand\AB@affilsepx{, \protect\Affilfont}
    \makeatother

    \affil[$*$]{chandrashekaran@cutn.ac.in}
    \affil[$**$]{sachindranathj@iisertvm.ac.in, vatsal.n15@iisertvm.ac.in}
}

\newcommand\shorttitle{Semipositivity of matrices}
\newcommand\authors{Chandrashekaran, Sachindranath and Vatsalkumar}

\fancyhf{}

\fancyhead[C]{%
\ifodd\value{page}
  \small\scshape\authors
\else
  \small\scshape\shorttitle
\fi
}
\pagestyle{fancy}

\date{\vspace{-5ex}}

\begin{document}

\maketitle
	
\begin{abstract}
Let $A$ be an $m \times n$ matrix with real entries. Given two proper cones 
$K_1$ and $K_2$ in $\reals^n$ and $\reals^m$, respectively, we say that $A$ is 
nonnegative if $A(K_1) \subseteq K_2$. $A$ is said to be semipositive if there 
exists a $x \in K_1^\circ$ such that $Ax \in K_2^\circ$. We prove that $A$ is 
nonnegative if and only if $A+B$ is semipositive for every semipositive matrix $B$. 
Applications of the above result are also brought out.
\end{abstract}

\noindent {\sc\textbf{Keywords}:} Semipositivity of matrices, nonnegative matrices, proper cones, 
semipositive cone\\
\noindent {\sc\textbf{MSC}:} 15B48, 90C33

\section{Introduction, Definition and Preliminaries}

We work with the field $\reals$ of real numbers throughout. The following notations 
will be used. $M_{m,n}(\reals)$ denotes the vector space of $m \times n$ matrices over 
$\reals$. When $m = n$, this space will be denoted by $M_n(\reals)$. 
Let us recall that a subset $K$ of $\reals^n$ is called a convex cone if $K + K \subseteq K$ 
and $\alpha K \subseteq K$ for all $\alpha \geq 0$. A convex cone $K$ is said to be proper 
if it is topologically closed, pointed ($K \cap -K = \{0\}$) and has nonempty interior 
(denoted by $K^\circ$). The dual, $K^{\ast}$, is defined as 
$K^{\ast} = \{y \in \reals^n : \langle y, x \rangle \geq 0 \ \forall x \in K\}$, where 
$\langle .,. \rangle$ denotes the usual Euclidean inner product on $\reals^n$. 
Well known examples of proper cones that occur frequently in the optimization literature 
are the nonnegative orthant $\reals^n_+$ in $\reals^n$, the Lorentz cone (also known as the 
ice-cream cone) $\mathcal{L}^n_{+}:= \{x = (x_1, \ldots, x_n)^t \in \reals^n : 
x_n \geq 0, x_n^2 - \displaystyle \sum_{i=1}^{n-1} x_i^2 \geq 0\}$, the set $\mathcal{S}^n_+$ 
of all symmetric positive semidefinite matrices in $\mathcal{S}^n$, the copositive ($\mathcal{COP}_n$) 
and completely positive ($\mathcal{CP}_n$) cones  in $\mathcal{S}^n$ and finally, the cone of 
squares $K$ in a finite dimensional Euclidean Jordan algebra (see \cite{gt} for details). 
Recall that $M_n(\reals)$ and $\mathcal{S}^n$ have the trace inner product 
$\langle X,Y \rangle = trace(Y^tX)$ and $\langle X,Y \rangle = trace(XY)$, respectively. 

\noindent
\textbf{Assumptions:} All cones in this paper are proper cones in appropriate finite dimensional 
real Hilbert spaces.

\begin{definition}\label{defn-1}
Let $K_1$ and $K_2$ be proper cones in $\reals^n$. $A \in M_n(\reals)$ is 
\begin{enumerate}
\item nonnegative (positive) if $A(K_1) \subseteq K_2 \ (A(K_1 \setminus \{0\}) 
\subseteq K_2^\circ$).
\item semipositive if there exists a $x \in K_1^\circ$ such that $Ax \in K_2^\circ$.
\item eventually nonnegative (positive) if there exists a positive integer $k_0$ such 
that $A^k$ is nonnegative 
(positive) for every $k \geq k_0$.
\end{enumerate}
\end{definition}

\noindent
Let $\pi(K_1,K_2)$ and $S(K_1,K_2)$ denote, respectively, the set of all matrices that 
are nonnegative and semipositive relative to proper cones $K_1$ and $K_2$. When $K_1 = K_2 = K$, 
we use the notation $\pi(K)$ and $S(K)$ for these sets and elements of these sets are called 
$K$-nonnegative and $K$-semipositive matrices, respectively. We write $x > 0$ to denote 
$x \in K^\circ$, the interior of $K$.

When $K$ is a proper cone in $\reals^n, \ \pi(K)$ is a proper cone in $M_n(\reals)$. 
A proof of this as well as an extensive study on the structure and properties of $\pi(K)$ 
can be found in \cite{tam} and the references cited therein. Observe that in 
Definition \ref{defn-1}, when $K = \reals^n_+$, we retrieve back the notion of a nonnegative 
matrix. A good source of reference on various applications of nonnegative matrices is the book 
by Berman and Plemmons \cite{bp-book}. Semipositivity occurs very naturally in dynamical systems, 
game theory and various other optimization problems, most notably the linear complementarity 
problem over various proper cones. For instance, given $A \in M_n(\reals)$, asymptotic 
stability (that is, the trajectory of the system from any starting point in $\reals^n$ converges 
to the origin as $t \rightarrow \infty$) of the continuous dynamical system $\dot{x} = Ax$ is equivalent to $\mathcal{S}^n_{+}$-semipositivity of the Lyapunov map $L_A$ on $\mathcal{S}^n$ 
induced by $A$, where $L_A(X) = AX + XA^t, \ X \in \mathcal{S}^n$. This is the famous Lyapunov 
theorem (for details, see the paper by Gowda and Tao \cite{gt}). We end this section by stating  
results that will be used later on. 

\begin{theorem}\label{fact-1}
For proper cones $K_1$ and $K_2$ in $\reals^n$ and $\reals^m$, respectively, and 
$A \in M_{m,n}(\reals), \ A \in \pi(K_1,K_2)$ if and only if $A^t \in \pi(K_2^{\ast},K_1^{\ast})$. 
\end{theorem}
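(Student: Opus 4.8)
The plan is to exploit the adjoint relation $\langle A^t y, x\rangle = \langle y, Ax\rangle$, which holds for every $x \in \reals^n$ and $y \in \reals^m$ because $A^t$ is the adjoint of $A$ with respect to the Euclidean inner products. The two membership conditions $A \in \pi(K_1,K_2)$ and $A^t \in \pi(K_2^{\ast},K_1^{\ast})$ will then be seen to be mere restatements of one another, via this identity together with the definition of the dual cone.

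First I would prove the forward implication. Assume $A(K_1) \subseteq K_2$. To show $A^t(K_2^{\ast}) \subseteq K_1^{\ast}$, fix an arbitrary $y \in K_2^{\ast}$; by the definition of $K_1^{\ast}$ it suffices to verify that $\langle A^t y, x\rangle \geq 0$ for every $x \in K_1$. Using the adjoint relation, $\langle A^t y, x\rangle = \langle y, Ax\rangle$, and since $x \in K_1$ forces $Ax \in K_2$ while $y \in K_2^{\ast}$, the right-hand side is nonnegative. As $x \in K_1$ was arbitrary, $A^t y \in K_1^{\ast}$, and as $y \in K_2^{\ast}$ was arbitrary, $A^t \in \pi(K_2^{\ast}, K_1^{\ast})$.

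For the converse I would avoid repeating the computation by invoking the forward implication applied to the matrix $A^t$ together with the cones $K_2^{\ast}$ and $K_1^{\ast}$ in place of $A$, $K_1$, $K_2$. This yields $(A^t)^t \in \pi\bigl((K_1^{\ast})^{\ast}, (K_2^{\ast})^{\ast}\bigr)$, that is, $A \in \pi(K_1^{\ast\ast}, K_2^{\ast\ast})$. The only nontrivial ingredient here is the bidual identity $K^{\ast\ast} = K$, which holds because a proper cone is in particular closed and convex (the standard duality theorem for closed convex cones). Substituting $K_1^{\ast\ast} = K_1$ and $K_2^{\ast\ast} = K_2$ then closes the loop and gives $A \in \pi(K_1, K_2)$.

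I expect no serious obstacle: the whole content is the adjoint identity plus the reflexivity $K^{\ast\ast} = K$. The one point meriting a little care is ensuring that the converse genuinely reduces to the forward direction, which is precisely why I would state and prove the forward step for \emph{arbitrary} proper cones, so that it can be reused verbatim with the dual cones $K_2^{\ast}$ and $K_1^{\ast}$.
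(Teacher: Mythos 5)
Your proof is correct: the forward direction is the standard computation with the adjoint identity and the definition of the dual cone, and the converse correctly reduces to the forward direction via the bidual identity $K^{\ast\ast}=K$, which is valid here since proper cones are closed and convex. The paper states this theorem as a known preliminary fact and gives no proof of its own, so there is nothing to compare against; your argument is the standard one and is complete.
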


\begin{lemma}\label{fact-2}
Let $K$ be a proper cone in $\reals^n$ and let $x \in K$. If $\langle x,y \rangle = 0$ for 
some $0 \neq y \in K^{\ast}$, then $x \notin K^\circ$.
\end{lemma}

\begin{theorem}(Theorem 2.8, \cite{chs})\label{alternative}
For proper cones $K_1$ and $K_2$ in $\reals^n$ and $\reals^m$, respectively, and 
an $m \times n$ matrix $A$, one and only one of the following
alternatives holds.
\begin{enumerate}
\item There exists $x \in K_1$ such that $A x \in K_2^\circ$. 
\item There exists $0 \neq y \in K_2^*$ such that $- A^t y \in K_1^*$.
\end{enumerate}
\end{theorem}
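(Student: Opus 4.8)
The plan is to establish the dichotomy in two halves: first that the two alternatives cannot hold simultaneously, and then that at least one of them must hold. The second half is where the real work lies, and I would obtain it from a separating hyperplane argument.

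For mutual exclusivity, I would argue by contradiction. Suppose both alternatives held: there are $x \in K_1$ with $Ax \in K_2^\circ$ and $0 \neq y \in K_2^*$ with $-A^t y \in K_1^*$. I would then evaluate $\langle Ax, y \rangle$ in two ways. Since $Ax \in K_2^\circ$ and $y$ is a nonzero element of $K_2^*$, Lemma \ref{fact-2} (applied in $K_2$) forces $\langle Ax, y \rangle > 0$, for if this inner product were zero then $Ax$ could not lie in $K_2^\circ$. On the other hand, $\langle Ax, y \rangle = \langle x, A^t y \rangle = -\langle x, -A^t y \rangle \leq 0$, because $x \in K_1$ and $-A^t y \in K_1^*$. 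These two conclusions are incompatible, so both alternatives cannot hold.

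For the existence half, I would assume alternative (1) fails and deduce (2). Failure of (1) means the convex cone $A(K_1)$ is disjoint from the open convex cone $K_2^\circ$. The key device is to form the Minkowski difference $C = K_2^\circ - A(K_1)$, which is an open convex set that does not contain the origin (a common zero would produce a point lying in both disjoint sets). Separating the point $0$ from the open convex set $C$ yields a nonzero functional $y \in \reals^m$ with $\langle y, w - z \rangle \leq 0$ for all $w \in K_2^\circ$ and $z \in A(K_1)$. Taking $z = 0 \in A(K_1)$ gives $\langle y, w \rangle \leq 0$ on $K_2^\circ$, hence on all of $K_2$ by continuity, so that $-y \in K_2^*$; and fixing a $w_0 \in K_2^\circ$ while scaling $z$ through the cone $A(K_1)$ and letting the scale tend to infinity forces $\langle y, Ax \rangle \geq 0$ for every $x \in K_1$, that is, $A^t y \in K_1^*$. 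Setting $\hat y = -y$ then gives $0 \neq \hat y \in K_2^*$ with $-A^t \hat y \in K_1^*$, which is precisely alternative (2).

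I expect the main obstacle to be the careful handling of this separation step: one must guarantee that the separating functional is genuinely nonzero and then extract two distinct dual-cone memberships from a single scalar inequality, exploiting the scale invariance of both $K_2^\circ$ and $A(K_1)$. A related subtlety is that $A(K_1)$ need not be closed, which is exactly why one separates against the open set $K_2^\circ$ (equivalently, against the open set $C$) rather than against $K_2$ itself; separating from an open convex set is what makes the nontrivial separation available without any closedness hypothesis. By comparison, the mutual-exclusivity half is routine once Lemma \ref{fact-2} supplies the strict inequality.
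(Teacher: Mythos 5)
The paper does not prove this theorem; it is quoted verbatim from Cain, Hershkowitz and Schneider \cite{chs} (their Theorem 2.8), so there is no in-paper argument to compare against. Your proof is correct and is essentially the standard separation-theoretic derivation of such theorems of the alternative: the exclusivity half correctly combines Lemma \ref{fact-2} with the dual-cone pairing to produce the contradiction $0 < \langle Ax, y\rangle \leq 0$, and the existence half correctly separates $0$ from the open convex set $K_2^\circ - A(K_1)$, recovers $-y \in K_2^*$ from the density of $K_2^\circ$ in $K_2$, and recovers $A^t y \in K_1^*$ by letting the scaling parameter on $A(K_1)$ tend to infinity. Your remark about why one must separate against the open set rather than against $A(K_1)$ and $K_2$ (since $A(K_1)$ need not be closed) identifies the genuine subtlety accurately. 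No gaps.
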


\section{Main Results}
We present the main results in this section. We divide this section into three parts: 
(1) A characterization of nonnegativity, (2) Applications to linear preserver problems and 
(3) Invariance of the semipositive cone of a matrix. 

\subsection{A characterization of nonnegativity relative to proper cones}\hspace*{\fill}\\

The following was proved recently by Dorsey et al in connection with strong linear preservers 
of semipositive matrices.

\begin{theorem}\label{orthant-case}
(Lemma 3.3, \cite{dgjjt}) A square matrix $A$ is $\reals^n_+$-nonnegative if and only if 
for every $\reals^n_+$-semipositive matrix $B$, the matrix $A + B$ is $\reals^n_+$-semipositive. 
\end{theorem}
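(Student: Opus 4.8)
The plan is to unwind both notions entrywise for $K_1 = K_2 = \reals^n_+$ and then treat the two implications separately. Here a matrix is nonnegative exactly when all its entries are nonnegative, while $A$ is semipositive precisely when there is a strictly positive vector $x$ (that is, $x \in (\reals^n_+)^\circ$) with $Ax$ strictly positive. I would first record the easy forward implication: if $A$ is entrywise nonnegative and $B$ is semipositive, choose $x > 0$ with $Bx > 0$; then $Ax \geq 0$ since $A \geq 0$ and $x \geq 0$, so $(A+B)x = Ax + Bx > 0$, a nonnegative vector plus a strictly positive one being strictly positive. Thus $A+B$ is semipositive.

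For the converse I would argue by contraposition: assuming $A$ is not nonnegative, I produce a single semipositive $B$ for which $A+B$ fails to be semipositive. The observation driving the construction is that if some row of a matrix is entirely nonpositive (in particular, if it is the zero row), then that matrix cannot be semipositive, since for every $x > 0$ the corresponding entry of the product is $\leq 0$ and hence not strictly positive. Equivalently, via Theorem \ref{alternative} with $K_1 = K_2 = \reals^n_+$, failure of semipositivity says there is $0 \neq y \geq 0$ with $A^t y \leq 0$, and taking $y = e_i$ corresponds exactly to row $i$ being nonpositive.

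Concretely, since $A$ is not nonnegative there is an entry $a_{ij} < 0$. I would define $B$ by letting its $i$-th row be the negative of the $i$-th row of $A$, so that row $i$ of $A+B$ is zero, and letting every other row be a fixed strictly positive constant vector. Then $A+B$ has a zero $i$-th row and so is not semipositive. It remains to verify that $B$ itself is semipositive, i.e. to exhibit $x > 0$ with $Bx > 0$. The entries $(Bx)_l$ for $l \neq i$ are automatically positive, so the only constraint is $(Bx)_i = -(Ax)_i > 0$, that is $(Ax)_i < 0$. Since $a_{ij} < 0$, taking $x = \mathbf{1} + t\,e_j$ and letting $t$ grow forces $(Ax)_i = (A\mathbf{1})_i + t\,a_{ij} \to -\infty$, so a suitable $x > 0$ exists.

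The main obstacle is exactly this last verification that the constructed $B$ is semipositive: replacing the $i$-th row of $A$ by its negative risks destroying the semipositivity of $B$, and the construction survives only because the strict negativity of $a_{ij}$ lets me steer some admissible $x > 0$ into the region where $(Ax)_i < 0$ while the remaining rows of $Bx$ stay positive. Everything else reduces to routine bookkeeping with the entrywise description of $\reals^n_+$.
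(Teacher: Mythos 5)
Your proof is correct, but it takes a more elementary, orthant-specific route than the paper, which does not prove this statement directly at all: it cites it to Dorsey et al.\ and then recovers it as the special case $K_1=\reals^n_+$, $K_2=\reals^m_+$ of the general Theorem~\ref{thm-1} over proper cones. The paper's argument for the general case picks $x\in K_1^\circ$ with $Ax\notin K_2$, a separating functional $y\in(K_2^\ast)^\circ$ with $\langle x,-A^ty\rangle>0$, and the rank-one matrix $B=vz^t/(y^tv)$ with $z=-A^ty$, $v\in K_2^\circ$; this $B$ is semipositive and satisfies $y^t(A+B)=0$, so $(A+B)u$ is never interior by Lemma~\ref{fact-2}. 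Your construction realizes the same underlying mechanism in coordinates, with $y=e_i$: you force the $i$-th row of $A+B$ to vanish, but your $B$ (negated $i$-th row of $A$, strictly positive elsewhere) is not rank one and is not the specialization of the paper's $B$, and your verification of semipositivity of $B$ (steering $x=\mathbf{1}+te_j$ using $a_{ij}<0$) replaces the paper's one-line computation $Bx=(z^tx)\,v/(y^tv)\in K_2^\circ$. What your approach buys is a self-contained, entrywise proof needing no duality beyond the observation that a zero row kills semipositivity; what it gives up is generality, since choosing ``an entry $a_{ij}<0$'' and the extremal functional $e_i$ exploits the facial structure of $\reals^n_+$ and does not transfer to an arbitrary proper cone, which is exactly the gap the paper's rank-one duality construction is designed to close. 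One small caution: your parenthetical appeal to Theorem~\ref{alternative} quantifies over $x\in K_1$ rather than $x\in K_1^\circ$ as in the definition of semipositivity; the two are equivalent by a continuity perturbation, but since you only use the zero-row observation, nothing in your argument actually depends on this.
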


Our main result of this note generalizes the above theorem to proper cones in $\reals^n$. We 
prove the result below. The first result is obvious and we skip the proof.

\begin{theorem}\label{thm-0}
Let $A \in M_{m,n}(\reals)$ and let $K_1, \ K_2$ be proper cones in $\reals^n$ and $\reals^m$, respectively. If $A \in \pi(K_1,K_2)$, then for any $B \in S(K_1,K_2), \ A+B \in S(K_1,K_2)$. 
\end{theorem}

The following theorem gives the converse of Theorem \ref{thm-0}.

\begin{theorem}\label{thm-1}
Let $A \in M_{m,n}(\reals)$ and let $K_1, \ K_2$ be proper cones in $\reals^n$ and $\reals^m$, respectively. If $A+B \in S(K_1,K_2)$ for every $B \in S(K_1,K_2)$ then $A \in \pi(K_1,K_2)$.
\end{theorem}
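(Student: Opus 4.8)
The plan is to prove the contrapositive: assuming $A \notin \pi(K_1,K_2)$, I will exhibit a single matrix $B \in S(K_1,K_2)$ for which $A+B \notin S(K_1,K_2)$. The first step is to extract a dual certificate of non-nonnegativity. By Theorem \ref{fact-1}, $A \notin \pi(K_1,K_2)$ is equivalent to $A^t \notin \pi(K_2^*,K_1^*)$, so there is some $y_0 \in K_2^*$ with $A^t y_0 \notin K_1^*$, and necessarily $y_0 \neq 0$. Dualizing once more (using $K_1^{**}=K_1$), the condition $A^t y_0 \notin K_1^*$ says exactly that there is an $x_0 \in K_1$ with $\langle A^t y_0, x_0\rangle = \langle y_0, A x_0\rangle < 0$; since the open half-space $\{x : \langle y_0, Ax\rangle < 0\}$ meets $K_1 = \overline{K_1^\circ}$, I may in fact take $x_0 \in K_1^\circ$.

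Next I would build $B$ as a rank-one matrix tailored to $y_0$. Fix any $u \in K_2^\circ$; since $0 \neq y_0 \in K_2^*$, Lemma \ref{fact-2} forces $\langle u, y_0\rangle > 0$, so I may set $v := -A^t y_0 / \langle u, y_0\rangle$ and $B := u v^t$. The single identity that drives the whole argument is $(A+B)^t y_0 = A^t y_0 + \langle u,y_0\rangle\, v = 0$; that is, $B$ is chosen precisely so that the functional $y_0$ is orthogonal to the entire range of $A+B$.

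Then I would verify the two required properties. For semipositivity of $B$: one computes $B x_0 = \langle v, x_0\rangle\, u$ with $\langle v, x_0\rangle = -\langle y_0, A x_0\rangle/\langle u, y_0\rangle > 0$, so $B x_0$ is a positive multiple of $u \in K_2^\circ$ and hence lies in $K_2^\circ$; as $x_0 \in K_1^\circ$, this gives $B \in S(K_1,K_2)$. For the failure of semipositivity of $A+B$: for every $x$ one has $\langle y_0, (A+B)x\rangle = \langle (A+B)^t y_0, x\rangle = 0$. If some $x \in K_1^\circ$ yielded $(A+B)x \in K_2^\circ$, then $(A+B)x$ would be an interior point of $K_2$ pairing to zero with $0 \neq y_0 \in K_2^*$, contradicting Lemma \ref{fact-2} (interior points pair strictly positively with nonzero dual vectors). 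Hence no such $x$ exists, so $A+B \notin S(K_1,K_2)$, completing the contrapositive.

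The construction itself is short; the step demanding the most care is the passage between the two flavours of positivity hidden in the definitions. The certificate $x_0$ produced via Theorem \ref{fact-1} a priori lies only in $K_1$, whereas $S(K_1,K_2)$ is defined through interior points, so I must invoke density of $K_1^\circ$ in $K_1$ together with openness of $K_2^\circ$ to legitimately place $x_0$ in the interior. The conceptual crux is the decision to force $(A+B)^t y_0 = 0$ rather than merely $(A+B)^t y_0 \in -K_1^*$: exact orthogonality lets the single fixed functional $y_0$ obstruct semipositivity at every interior point simultaneously, which is exactly what Lemma \ref{fact-2} converts into the conclusion. (One could alternatively phrase the obstruction through alternative (2) of Theorem \ref{alternative}, but the direct orthogonality argument is cleaner.)
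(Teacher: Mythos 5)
Your proposal is correct and follows essentially the same route as the paper: a rank-one perturbation $B = uv^t$ with $v = -A^ty_0/\langle u,y_0\rangle$ (the paper's $B = vz^t/(y^tv)$ with $z=-A^ty$), engineered so that $(A+B)^ty_0 = 0$, after which Lemma \ref{fact-2} blocks semipositivity of $A+B$. The only cosmetic difference is that you obtain the certificate pair by first producing $y_0$ via Theorem \ref{fact-1} and then $x_0$ by biduality, whereas the paper starts from $x\in K_1^\circ$ with $Ax\notin K_2$ and separates to get $y$; both are valid.
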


\begin{proof}
Suppose $A \notin \pi(K_1,K_2)$. There exists $0 \neq x \in K_1^\circ$ such that $Ax \notin K_2$. 
By the definition of $K^{\ast}$ and continuity of the inner product, there exists 
$0 \neq y \in (K_2^{\ast})^\circ$ such that $\langle Ax,y \rangle = \langle x, A^ty \rangle < 0$ 
or $\langle x, -A^ty \rangle > 0$. Let $v \in K_2^\circ$ and define $B = \dfrac{vz^t}{y^tv}$ 
where $z = -A^ty$. Note that $y^tv > 0$. Then $Bx = (z^tx) \dfrac{v}{y^tv} \in K_2^\circ$. 
We have thus verified that $B \in S(K_1,K_2)$. For $u \in K_1$ consider $(A+B)u$ and note that 
$y^T(A+B)u = 0$. That is $\langle y, (A+B)u \rangle = 0$, where $y \in (K_2^{\ast})^\circ$. 
Thus $(A+B)u \notin K_2^\circ$ (see Lemma \ref{fact-2} above). Hence $A+B \notin S(K_1,K_2)$. 
\end{proof}

The following corollary is immediate.

\begin{corollary}\label{cor-1}
Let $K$ be a proper self-dual cone in $\reals^n$. If for every $K$-semipositive matrix $B$, 
the matrix $A+B$ is $K$-semipositive, then $A$ is $K$-nonnegative.  
\end{corollary}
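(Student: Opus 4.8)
The plan is to prove the contrapositive: assume $A \notin \pi(K_1, K_2)$ and construct a specific semipositive matrix $B$ so that $A + B$ fails to be semipositive. So the first step is to unpack what it means for $A$ not to be nonnegative. Since $A(K_1) \not\subseteq K_2$, there is some point in $K_1$ whose image escapes $K_2$; I would want to strengthen this to find such a point in the interior $K_1^\circ$, which should follow from the fact that $K_1^\circ$ is dense in $K_1$ together with the closedness of $K_2$ and continuity of $A$. Call this point $x$.

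Next I would separate the image $Ax$ from the cone $K_2$ using a dual vector. Since $Ax \notin K_2$ and $K_2$ is closed and convex, there is a functional $y$ witnessing this separation; concretely I expect $y \in K_2^\ast$ (or even its interior) with $\langle Ax, y \rangle < 0$, equivalently $\langle x, A^t y \rangle < 0$, so that $z := -A^t y$ satisfies $\langle x, z \rangle > 0$. The key idea of the construction is then to build a rank-one matrix $B$ that (i) is semipositive, so it sends the interior point $x$ into $K_2^\circ$, yet (ii) exactly cancels the $y$-component of $A$. Choosing a reference interior point $v \in K_2^\circ$ and setting $B = \dfrac{v z^t}{y^t v}$ accomplishes both: the normalization by $y^t v > 0$ makes $Bx = (z^t x)\dfrac{v}{y^t v}$ a positive multiple of $v$, hence interior to $K_2$, confirming $B \in S(K_1, K_2)$; and the design forces $y^t(A + B)u = 0$ for every $u$, since $y^t B = z^t = -y^t A$.

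The final step is to conclude that $A + B$ cannot be semipositive. For any candidate $u \in K_1$, the image $(A+B)u$ is orthogonal to $y$, where $y$ lies in the interior of $K_2^\ast$. By Lemma \ref{fact-2}, a nonzero element of the dual cone cannot annihilate an interior point of $K_2$, so $(A+B)u \notin K_2^\circ$. Since this holds for all $u \in K_1$, in particular no interior point of $K_1$ maps into $K_2^\circ$, so $A + B \notin S(K_1, K_2)$, contradicting the hypothesis.

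I expect the main obstacle to be the very first step: promoting the witness from $K_1$ to its interior $K_1^\circ$, and correspondingly arranging that the separating functional $y$ lands in the \emph{interior} of $K_2^\ast$ rather than merely on its boundary. The interiority of $y$ is precisely what is needed to invoke Lemma \ref{fact-2} in the last step, so this is the load-bearing subtlety; it should be handled by a density and continuity argument together with Theorem \ref{alternative}, which supplies exactly the kind of strict separation alternative that guarantees a genuinely interior dual witness.
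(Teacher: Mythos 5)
Your proposal is correct and is essentially the paper's own argument: the corollary is stated there as an immediate specialization of Theorem \ref{thm-1} (take $K_1 = K_2 = K$), and your contrapositive construction --- interior witness $x$, dual vector $y \in (K_2^\ast)^\circ$ with $\langle Ax, y\rangle < 0$, the rank-one matrix $B = vz^t/(y^tv)$ with $z = -A^ty$, and the conclusion via Lemma \ref{fact-2} --- reproduces that theorem's proof verbatim. The only slight misstatement is attributing the interior dual witness to Theorem \ref{alternative}; the paper obtains it directly from density of $(K_2^\ast)^\circ$ in $K_2^\ast$ and continuity of the inner product, exactly as you also suggest.
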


By taking $K_1 = \reals^n_+$ and $K_2 = \reals^m_+$, we retrieve back the result of 
Dorsey et al as in Theorem \ref{orthant-case}. 

The following is an isomorphic version of Theorem \ref{thm-1}.

\begin{theorem}\label{thm-2}
Let $K$ be a proper cone in $\reals^n$ for which Theorem \ref{thm-1} holds and let 
$K_1 \subset \reals^n$ be isomorphic to $K$ through an isomorphism $T$ with $T(K) = K_1$. 
Then $TAT^{-1} \in M_n(\reals)$ is $K_1$-nonnegative if and only if for every 
$K$-semipositive matrix $B \in M_n(\reals), \ A+B$ is $K$-semipositive.
\end{theorem}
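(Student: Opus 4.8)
The plan is to reduce the ``isomorphic'' statement to the single-cone case already established in Theorems \ref{thm-0} and \ref{thm-1}, exploiting that $T$ carries $K$ onto $K_1$ and that the conjugation $A \mapsto TAT^{-1}$ transports the associated cone structure. Since $T$ is a linear isomorphism of $\reals^n$, it is in particular a homeomorphism; hence $K_1 = T(K)$ is again closed, pointed and has nonempty interior, so it is itself a proper cone, and moreover $T$ maps interior onto interior, that is $T(K^\circ) = K_1^\circ$ (equivalently $T^{-1}(K_1^\circ) = K^\circ$). These two observations are the only topological input needed.

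The heart of the argument is that conjugation by $T$ intertwines $K$-nonnegativity with $K_1$-nonnegativity. Indeed, using $T^{-1}(K_1) = K$ one computes
\[
(TAT^{-1})(K_1) = TA\bigl(T^{-1}(K_1)\bigr) = TA(K),
\]
and since $T$ is a bijection with $T(K) = K_1$, one has $TA(K) \subseteq T(K) = K_1$ if and only if $A(K) \subseteq K$. Therefore $TAT^{-1}$ is $K_1$-nonnegative if and only if $A$ is $K$-nonnegative. An entirely analogous computation, using $T(K^\circ) = K_1^\circ$, shows that $B$ is $K$-semipositive if and only if $TBT^{-1}$ is $K_1$-semipositive; this transport of semipositivity will not be strictly needed below but records that the whole structure is carried by $T$.

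With the nonnegativity identity in hand, the biconditional follows by chaining the single-cone results applied to $K$. If $TAT^{-1}$ is $K_1$-nonnegative, then $A$ is $K$-nonnegative, and Theorem \ref{thm-0} (with $K_1 = K_2 = K$) yields that $A + B$ is $K$-semipositive for every $K$-semipositive $B$. Conversely, if $A + B$ is $K$-semipositive for every $K$-semipositive $B$, this is precisely the hypothesis of Theorem \ref{thm-1} for the cone $K$ --- available exactly because $K$ is assumed to be a cone ``for which Theorem \ref{thm-1} holds'' --- so $A$ is $K$-nonnegative, and the identity of the previous paragraph then gives that $TAT^{-1}$ is $K_1$-nonnegative.

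I do not expect a genuine obstacle here: the content of the theorem is carried entirely by the conjugation identity $(TAT^{-1})(K_1) = TA(K)$ together with the interior-preservation $T(K^\circ) = K_1^\circ$, after which the statement is a formal consequence of Theorems \ref{thm-0} and \ref{thm-1}. The only point requiring care is the bookkeeping with $T$ and $T^{-1}$ --- in particular verifying that $T^{-1}$ sends $K_1$ back to $K$ and interiors to interiors --- so that nonnegativity (a containment statement) and semipositivity (an interior statement) are each transported in the correct direction.
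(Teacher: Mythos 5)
Your proposal is correct and follows essentially the same route as the paper: the paper's entire proof is the one-line observation that $\pi(K) = T^{-1}\pi(K_1)T$, which is exactly your conjugation identity $(TAT^{-1})(K_1) = TA(K)$, after which the statement reduces to Theorems \ref{thm-0} and \ref{thm-1} applied to the single cone $K$. You have merely written out the bookkeeping that the paper leaves implicit.
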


\begin{proof}
The proof follows as $\pi(K) = T^{-1}\pi(K_1)T$, where $T$ is an isomorphism between $K$ 
and $K_1$.
\end{proof}

A few remarks are in order. 
\begin{remark}\label{rem-1}
\
\begin{enumerate}
\item Theorem \ref{thm-1} can be suitably modified for linear maps between finite dimensional 
real Hilbert spaces $V$ and $W$, equipped with proper cones $K_1$ and $K_2$, respectively. 
		
\item Let $Q$ be a nonsingular symmetric matrix with inertia 
$(n-1,0,1)$. Let $\lambda_n$ be the single negative eigenvalue of $Q$ with a normalized 
eigenvector $u_n$. Define $K:= K(Q,u_n) = \{x \in \mathbb{R}^n: x^tQx \leq 0, \ x^tu_n \geq 0\}$. 
Let $-K = K(Q,-u_n)$. It can be seen that $K(Q,\pm u_n)$ is a proper cone, known as an 
ellipsoidal cone. It is now easy to see that the Lorentz cone 
$\mathcal{L}^n_{+}$ is an example of an ellipsoidal cone. This can be seen by taking 
$Q = \begin{bmatrix*}[r]
      I_{n-1} & 0\\
	  0 & -1
     \end{bmatrix*}$ and $u_n = e_n$, the $n^{th}$ unit vector in $\mathbb{R}^n$. The 
following result from \cite{sw} will be used below. 
		
\begin{lemma}\label{ellipsoidal-cone}
(Lemma 2.7, \cite{sw}) A cone $K$ is ellipsoidal if and only if $K = T(\mathcal{L}^n_{+})$ for some 
invertible matrix $T$. 
\end{lemma}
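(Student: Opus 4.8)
The plan is to prove both implications by reducing everything to a congruence (a linear change of variables) that carries the defining quadratic form $x^t Q x$ to the standard one $x^t Q_0 x$, where $Q_0 = \mathrm{diag}(I_{n-1},-1)$, while simultaneously carrying the selector to $e_n$. First I would record that with this $Q_0$ the Lorentz cone is exactly $\mathcal{L}^n_+ = \{x : x^t Q_0 x \le 0,\ x^t e_n \ge 0\}$ and that $e_n$ is the eigenvector of $Q_0$ for its unique negative eigenvalue, so $\mathcal{L}^n_+$ is itself ellipsoidal; this is the template both directions aim at.

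For the forward direction (if $K$ is ellipsoidal then $K = T(\mathcal{L}^n_+)$), I would diagonalize $Q = P\Lambda P^t$ with $P$ orthogonal and $\Lambda = \mathrm{diag}(\lambda_1,\dots,\lambda_{n-1},\lambda_n)$, the first $n-1$ eigenvalues positive and $\lambda_n<0$, so that the normalized negative eigenvector is $u_n = Pe_n$. Setting $S = \mathrm{diag}(\sqrt{\lambda_1},\dots,\sqrt{\lambda_{n-1}},\sqrt{-\lambda_n})$ and $R = PS$ gives $Q = R Q_0 R^t$. Substituting $x = R^t y$ turns $y^t Q y \le 0$ into $x^t Q_0 x \le 0$, and the crucial computation $R^{-1} u_n = S^{-1} P^t u_n = S^{-1} e_n = (-\lambda_n)^{-1/2} e_n$ turns $y^t u_n \ge 0$ into $x_n \ge 0$. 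Hence $y \in K \iff R^t y \in \mathcal{L}^n_+$, i.e. $K = R^{-t}(\mathcal{L}^n_+)$, so $T = R^{-t}$ works. The hypothesis that $u_n$ is the eigenvector is used exactly here, making $R^{-1}u_n$ a positive multiple of $e_n$.

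For the converse (if $K = T(\mathcal{L}^n_+)$ then $K$ is ellipsoidal), I would set $\hat Q = T^{-t}Q_0 T^{-1}$, which is symmetric and, by Sylvester's law of inertia, again has inertia $(n-1,0,1)$. Writing $x = T^{-1}y$ in the description of $\mathcal{L}^n_+$ gives $K = \{y : y^t \hat Q y \le 0,\ \langle T^{-t}e_n, y\rangle \ge 0\}$. This is almost the ellipsoidal form, except that the selector $T^{-t}e_n$ need not coincide with the eigenvector $\hat u$ of $\hat Q$ for its negative eigenvalue, which the definition demands. The remaining task is to show that replacing $T^{-t}e_n$ by $\hat u$ (with sign fixed so that $\langle T^{-t}e_n, \hat u\rangle > 0$) leaves the cone unchanged.

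This reconciliation is the main obstacle, and I would handle it with a nappe-selection argument. The double cone $\{y : y^t \hat Q y \le 0\}$ has interior $\{y : y^t\hat Q y < 0\}$ consisting of two connected components (nappes), and I claim both functionals $\langle T^{-t}e_n,\cdot\rangle$ and $\langle \hat u,\cdot\rangle$ are nonzero with constant, opposite signs on the two nappes. For $T^{-t}e_n$ this holds since $\langle T^{-t}e_n, y\rangle = (T^{-1}y)_n$ and $T^{-1}y$ lies in the interior of the standard double cone, where the last coordinate is nonzero and sign-constant on each nappe; for $\hat u$ the same is seen after orthogonally diagonalizing $\hat Q$. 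Since $\hat u^t \hat Q \hat u < 0$ places $\hat u$ in one nappe, $\langle T^{-t}e_n, \hat u\rangle$ is nonzero, so after fixing the sign of $\hat u$ the two selectors are positive on the same nappe; as the cone is the intersection of the double cone with either half-space and both pick out the closure of that nappe, the two descriptions coincide. This yields $K = K(\hat Q,\hat u)$, which is ellipsoidal.
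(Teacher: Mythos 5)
The paper does not prove this lemma at all --- it is imported verbatim as Lemma 2.7 of Stern and Wolkowicz \cite{sw} --- so there is no in-paper argument to compare against; what can be said is that your proof is correct and self-contained. The forward direction (congruence $Q = RQ_0R^t$ with $R = PS$, checking that $R^{-1}u_n$ is a positive multiple of $e_n$ so the half-space condition transforms correctly) is the standard computation and is carried out accurately. The converse is where a careless argument would stop at $K = \{y : y^t\hat Qy \le 0,\ \langle T^{-t}e_n, y\rangle \ge 0\}$ and declare victory, overlooking that the definition of an ellipsoidal cone requires the selecting functional to be the negative eigenvector $\hat u$ of $\hat Q$, not an arbitrary vector; you correctly identify this as the real content and close the gap with the nappe argument. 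That argument is sound: on the punctured double cone $\{y \ne 0 : y^t\hat Qy \le 0\}$ both functionals $\langle T^{-t}e_n,\cdot\rangle$ and $\langle\hat u,\cdot\rangle$ are nonvanishing (for $\hat u$ this follows from the orthogonal diagonalization, since $w_n = 0$ together with $\sum_{i<n}\hat\lambda_i w_i^2 \le \hat\lambda_n w_n^2$ forces $y=0$), hence sign-constant on each of the two components, and each is odd under $y \mapsto -y$; since $\hat u^t\hat Q\hat u < 0$ places $\hat u$ in one component, the sign normalization $\langle T^{-t}e_n,\hat u\rangle > 0$ makes the two half-space conditions select the same closed nappe, giving $K = K(\hat Q,\hat u)$. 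This is a complete proof of a statement the paper merely cites.
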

		
\item It now follows from Theorem \ref{thm-2} and Lemma \ref{ellipsoidal-cone} that over any 
ellipsoidal cone $K$, the square matrix $TAT^{-1}$ is $K$-nonnegative if and only if for every 
$\mathcal{L}^n_{+}$-semipositive matrix $B, \ A+B$ is $\mathcal{L}^n_{+}$-semipositive.
\end{enumerate}
\end{remark}

\noindent
In what follows, we illustrate Theorem \ref{thm-1} for the Lorentz cone, as the 
computations are nontrivial and interesting.

\begin{proof} 
Suppose $A$ is not $\mathcal{L}^n_{+}$-nonnegative, but for every $\mathcal{L}^n_{+}$-semipositive 
matrix $B$, the matrix $A+B$ is $\mathcal{L}^n_{+}$-semipositive. Choose $0 \neq x \in 
\mathcal{L}^n_{+}$such that $A^t x \notin \mathcal{L}^n_{+}$. Let us discuss two cases.
	
\noindent
\underline{Case-1:} Suppose $y:= -A^t x \in \mathcal{L}^n_{+}$. Notice that $y \neq 0$. 
For, otherwise, $A^tx = 0 \in \mathcal{L}^n_{+}$. There exist 
$\alpha_i \in \mathbb{R}, \ 1 \leq i  \leq n-1$
and $\alpha_n > 0$ such that $\alpha_i x_n = y_i$ and $\alpha_n x_n < y_n$. 
Let $B = \begin{bmatrix*}[r]
	      0 & \cdots & 0 \\
	     \vdots & \cdots & \vdots \\
	      0 & \cdots & 0 \\
	     \alpha_1 & \cdots & \alpha_n
	   	 \end{bmatrix*}$. Note that $B$ is $\mathcal{L}^n_{+}$-semipositive, 
since $\alpha_n > 0$. Now $-(B+A)^t x =  \begin{bmatrix*}[r]
									   	  0 \\
										  \vdots \\
										  0 \\
										  - \alpha_n x_n +y_n
										 \end{bmatrix*} \in \mathcal{L}^n_{+}$, 
since $0 < - \alpha_n x_n +y_n$. It is now easy to see that $A+B$ is not $\mathcal{L}^n_{+}$-semipositive.\\
\noindent
\underline{Case-2:} Suppose $y:= -A^t x \notin \mathcal{L}^n_{+}$. Consider the matrix 
$B = \begin{bmatrix*}[r]
   	  0  \\
	  \vdots \\
	  0 \\
	  \frac{y^t}{x_n}
	 \end{bmatrix*}$. Suppose for every $x \in \mathcal{L}^n_{+}, \ y^t x \leq 0$. Then, 
we must have $- y = A^t x \in \mathcal{L}^n_{+}$. Since $A^t x \notin \mathcal{L}^n_{+},$ this 
is a contradiction to our assumption. Therefore $B$ is $\mathcal{L}^n_{+}$-semipositive. 
Now $-(B+A)^tx = -B^tx -A^tx = -B^tx + y$. Note that $B^t = \begin{bmatrix*}[r]
															 0 & 0 & \ldots & y/x_n
														    \end{bmatrix*}$. Therefore, 
$-B^tx + y = -y + y = 0 \in \mathcal{L}^n_{+}$. As before, it follows that $A+ B$ is not 
$\mathcal{L}^n_{+}$-semipositive.
\end{proof}

Let us also mention that a characterization of nonnegativity over the Lorentz cone 
$\mathcal{L}^n_{+}$ was given earlier by Loewy and Schneider (Theorem 2.2, \cite{ls}). 

\subsection{Application to linear preserver problems}\hspace*{\fill}\\

Besides independent interest in generalizing Lemma 3.3 of [5] over proper cones, one can 
use Theorem \ref{thm-1} to prove that a strong linear preserver of semipositivity with respect 
to a proper cone $K$ also preserves the set of nonnegative matrices over $K$. 
Let us mention a useful result before proceeding further. If $\mathcal{S}$ is a collection 
of matrices, a linear map $L$ on $M_n(\reals)$ is called an into preserver of $\mathcal{S}$ if 
$L(\mathcal{S}) \subset \mathcal{S}$ and a strong/onto linear preserver if 
$L(\mathcal{S}) = \mathcal{S}$. If $\mathcal{S} \subset M_n(\reals)$ contains a basis for $M_n(\reals)$, then a strong linear preserver $L$ of $\mathcal{S}$ is an into linear preserver 
that is invertible with $L^{-1}$ being an into linear preserver of $\mathcal{S}$ (see \cite{d} 
for details). The following results will be used in the theorem that follows.

\begin{lemma}\label{basis-result-1}
Given any proper cone $K$ in $\reals^n$, there is a basis for $M_n(\reals)$ from the (proper) 
cone $\pi(K)$.	
\end{lemma}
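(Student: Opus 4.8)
The plan is to lean on a fact already recorded in this section: when $K$ is a proper cone in $\reals^n$, the set $\pi(K)$ is itself a proper cone in $M_n(\reals)$. A proper cone has nonempty interior, and any subset of a finite dimensional real vector space whose linear span is the whole space must contain a basis. Since a set with nonempty interior automatically spans its ambient space, it would follow at once that $\pi(K)$ contains a basis of $M_n(\reals)$ (recall $\dim M_n(\reals) = n^2$). On this route the only external input is the properness of $\pi(K)$, and the lemma is essentially immediate.

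To make the basis explicit rather than merely existential, I would instead exhibit its members directly using rank-one matrices. For $u \in \reals^n$ and $\phi \in \reals^n$, the matrix $u\phi^t$ acts by $(u\phi^t)x = \langle \phi, x \rangle\, u$. If $u \in K$ and $\phi \in K^{\ast}$, then for every $x \in K$ one has $\langle \phi, x \rangle \geq 0$, so $(u\phi^t)x = \langle \phi, x\rangle\, u \in K$; that is, $u\phi^t \in \pi(K)$. Hence $\pi(K)$ contains all such rank-one matrices. I would then choose a basis $u_1, \dots, u_n$ of $\reals^n$ lying in $K$ and a basis $\phi_1, \dots, \phi_n$ of $\reals^n$ lying in $K^{\ast}$; both choices are available because a proper cone, and also the dual of a proper cone (which is again proper), has nonempty interior and therefore spans its ambient space. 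The $n^2$ matrices $\{u_i\phi_j^t : 1 \le i,j \le n\}$ then lie in $\pi(K)$, and the claim is that they form a basis of $M_n(\reals)$.

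The single nontrivial ingredient, and the main obstacle such as it is, is the linear independence of the family $\{u_i\phi_j^t\}$. This is the standard outer-product fact: if $\{u_i\}$ and $\{\phi_j\}$ are bases of $\reals^n$, then the products $u_i\phi_j^t$ form a basis of $M_n(\reals)$. Concretely, a relation $\sum_{i,j} c_{ij}\, u_i\phi_j^t = 0$ can be read as $\big(\sum_i u_i e_i^t\big)\, C\, \big(\sum_j e_j \phi_j^t\big) = 0$ with $C = (c_{ij})$, where the two change-of-basis matrices are invertible; this forces $C = 0$. With this in hand the $n^2$ matrices are linearly independent and the proof is complete, everything else being immediate from the definitions of $\pi(K)$ and $K^{\ast}$.
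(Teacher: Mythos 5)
Your proof is correct. The paper actually gives no proof of this lemma at all: it is stated without argument, the implicit justification being the fact recorded in Section 1 (citing Tam) that $\pi(K)$ is a proper cone in $M_n(\reals)$, hence has nonempty interior, hence spans $M_n(\reals)$ and so contains a basis. That is precisely your first paragraph, so on that route you and the paper coincide. Your second route goes further and is arguably more useful: the rank-one matrices $u_i\phi_j^t$ with $u_1,\dots,u_n$ a basis of $\reals^n$ drawn from $K$ and $\phi_1,\dots,\phi_n$ a basis drawn from $K^{\ast}$ do lie in $\pi(K)$ (since $\langle \phi_j, x\rangle \geq 0$ for $x \in K$), and the factorization $\sum_{i,j} c_{ij}\,u_i\phi_j^t = UC\Phi^t$ with $U,\Phi$ invertible correctly forces linear independence. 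This explicit construction is self-contained (it does not need the full strength of Tam's result, only that $K$ and $K^{\ast}$ each span $\reals^n$) and mirrors the style of the paper's own proof of the companion Lemma on $S(K)$, which likewise exhibits a concrete basis. Both arguments are sound; the abstract one is shorter, the explicit one buys a constructive basis that could be reused downstream.
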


\begin{lemma}\label{basis-result-2}
Let $K$ be a proper cone in $\reals^n$ and let $S_1,S_2 \in \pi(\reals^n_+,K)$ be such that 
$S_1((\reals^n_+)^\circ) \subseteq K^\circ$ and $S_2$ invertible. If $B \in S(\reals^n_+)$, 
then $S_1BS_2^{-1} \in S(K)$. Consequently, $S(K)$ contains a basis for $M_n(\reals)$.
\end{lemma}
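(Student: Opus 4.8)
The plan is to produce an explicit interior witness for the $K$-semipositivity of $S_1BS_2^{-1}$. Since $B \in S(\reals^n_+)$, there is an $x \in (\reals^n_+)^\circ$ with $Bx \in (\reals^n_+)^\circ$. The natural candidate witness is $u := S_2x$, because then $S_2^{-1}u = x$ and hence $(S_1BS_2^{-1})u = S_1(Bx)$. As $Bx \in (\reals^n_+)^\circ$ and, by hypothesis, $S_1((\reals^n_+)^\circ) \subseteq K^\circ$, we get $(S_1BS_2^{-1})u = S_1(Bx) \in K^\circ$ immediately.

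The only point requiring care is that the witness $u = S_2x$ must itself lie in $K^\circ$ and not merely in $K$; this is where the invertibility of $S_2$ is used. Being an invertible linear map, $S_2$ is a homeomorphism of $\reals^n$, so $S_2((\reals^n_+)^\circ)$ is an open set. It is contained in $S_2(\reals^n_+) \subseteq K$ (the inclusion holds since $S_2 \in \pi(\reals^n_+,K)$), and an open subset of $K$ must lie in the interior $K^\circ$. Hence $u = S_2x \in K^\circ$, and together with the previous paragraph this exhibits $u \in K^\circ$ with $(S_1BS_2^{-1})u \in K^\circ$, i.e.\ $S_1BS_2^{-1} \in S(K)$. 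I expect this small observation --- that an \emph{invertible} nonnegative map sends the interior of its domain cone into the interior of its target cone --- to be the only real obstacle; everything else is bookkeeping.

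For the concluding assertion I would first check that matrices $S_1,S_2$ with the required properties exist. I would pick a basis $v_1,\dots,v_n$ of $\reals^n$ with every $v_i \in K^\circ$ (possible since $K^\circ$ is open and nonempty) and let $S$ be the matrix whose $i$-th column is $v_i$; then $S$ is invertible, $S(\reals^n_+) \subseteq K$ since $K$ is a convex cone, and $S((\reals^n_+)^\circ) \subseteq K^\circ$ since $K^\circ$ is itself a convex cone closed under positive combinations. Taking $S_1 = S_2 = S$, the map $\Phi(B) = SBS^{-1}$ is a linear automorphism of $M_n(\reals)$ which, by the first part, carries $S(\reals^n_+)$ into $S(K)$. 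Finally, $S(\reals^n_+)$ is nonempty (the identity is $\reals^n_+$-semipositive) and open (if $Bx \in (\reals^n_+)^\circ$ then $B'x \in (\reals^n_+)^\circ$ for all $B'$ near $B$, by continuity); a nonempty open set cannot lie in a proper subspace, so $S(\reals^n_+)$ spans $M_n(\reals)$ and contains a basis. Its image under the automorphism $\Phi$ is then a basis of $M_n(\reals)$ lying entirely in $S(K)$.
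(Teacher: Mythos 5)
Your proof is correct and takes essentially the same route as the paper's: the witness is $u = S_2x$ with $(S_1BS_2^{-1})u = S_1(Bx) \in K^\circ$, and the basis for $S(K)$ is obtained by pushing a basis of $S(\reals^n_+)$ through $B \mapsto S_1BS_2^{-1}$. If anything you are more careful at the one delicate point --- the paper only records $S_2x \in K$ while semipositivity requires the witness in $K^\circ$, which your observation that the invertible $S_2$ maps the open set $(\reals^n_+)^\circ$ into $K^\circ$ supplies --- and your openness argument for finding a basis inside $S(\reals^n_+)$ simply replaces the paper's explicit example (matrices with a $2$ in one entry and $1$ elsewhere).
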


\begin{proof}
To prove the first statement, choose $x \in (\reals^n_+)^\circ$ such that 
$Bx \in (\reals^n_+)^\circ$. Take $y:= S_2x \in K$. We then have $S_1BS_2^{-1}y \in K^\circ$. 
This finishes the proof as $K$ is a proper cone. To prove the second statement, take a basis 
$\{B_1, \ldots, B_{n^2}\}$ for $M_n(\reals)$ from $S(\reals^n_+)$ (for example, the set of 
matrices with $2$ in one entry and remaining entries being $1$). Consider the collection 
$\{A_i: i = 1, \ldots n^2 \}$, where $A_i = S_1B_iS_2$ with $S_1,S_2 \in \pi(\reals^n_+,K)$ 
are both invertible and $S_1((\reals^n_+)^\circ) \subseteq K^\circ$. From the first statement 
of this result, we know that each $A_i \in S(K)$. It is now obvious that the $A_i$s form a 
basis for $M_n(\reals)$.
\end{proof}

\begin{theorem}\label{thm-1.1}
Let $K$ be a proper cone in $\reals^n$. Suppose $L$ is a strong linear preserver of $S(K)$. 
Then $L$ is a linear automorphism of $\pi(K)$.
\end{theorem}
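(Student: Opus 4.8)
The plan is to leverage the characterization of $\pi(K)$ furnished jointly by Theorems \ref{thm-0} and \ref{thm-1}: for a proper cone $K$, a matrix $A$ lies in $\pi(K)$ if and only if $A + B \in S(K)$ for every $B \in S(K)$. This reduces the entire statement to a formal manipulation of the preserver hypothesis, since membership in $\pi(K)$ has been rephrased purely in terms of $S(K)$, the set that $L$ is assumed to preserve.

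First I would record that $L$ is invertible and that $L^{-1}$ is again a preserver of $S(K)$. By Lemma \ref{basis-result-2}, $S(K)$ contains a basis for $M_n(\reals)$; hence, by the preserver result quoted earlier (from \cite{d}), the strong preserver $L$ is invertible and $L^{-1}$ is an into preserver of $S(K)$. Together with $L(S(K)) = S(K)$ this yields $L^{-1}(S(K)) = S(K)$ as well, so $L$ and $L^{-1}$ play symmetric roles in what follows.

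Next I would prove the inclusion $L(\pi(K)) \subseteq \pi(K)$. Fix $A \in \pi(K)$; to show $L(A) \in \pi(K)$, by the characterization it suffices to verify that $L(A) + B \in S(K)$ for an arbitrary $B \in S(K)$. The key move is to pull $B$ back through $L$: since $L$ maps $S(K)$ onto $S(K)$, write $B = L(B')$ with $B' = L^{-1}(B) \in S(K)$. Then $L(A) + B = L(A + B')$, and because $A \in \pi(K)$ and $B' \in S(K)$, Theorem \ref{thm-0} gives $A + B' \in S(K)$; applying $L$ and using $L(S(K)) \subseteq S(K)$ shows $L(A) + B \in S(K)$. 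As $B$ was arbitrary, Theorem \ref{thm-1} forces $L(A) \in \pi(K)$.

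Finally, running the identical argument with $L^{-1}$ in place of $L$ gives $L^{-1}(\pi(K)) \subseteq \pi(K)$, equivalently $\pi(K) \subseteq L(\pi(K))$; combining the two inclusions yields $L(\pi(K)) = \pi(K)$, so the restriction of the linear invertible map $L$ to $\pi(K)$ is a linear automorphism. The one point requiring care, and the only genuine obstacle, is the first step: without the invertibility of $L$ and the fact that $L^{-1}$ also preserves $S(K)$, the pullback $B' = L^{-1}(B)$ would be unavailable, and one could only conclude a one-sided inclusion rather than the full automorphism property. This is precisely why Lemma \ref{basis-result-2}, guaranteeing a basis inside $S(K)$, is indispensable here.
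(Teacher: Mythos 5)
Your proposal is correct and follows essentially the same route as the paper: invertibility of $L$ via the basis inside $S(K)$ (Lemma \ref{basis-result-2}), then the characterization of $\pi(K)$ from Theorems \ref{thm-0} and \ref{thm-1} combined with surjectivity of $L$ on $S(K)$ to get $L(\pi(K)) \subseteq \pi(K)$, and the symmetric argument for $L^{-1}$. Your phrasing via the pullback $B' = L^{-1}(B)$ is just the mirror image of the paper's push-forward of $A+B$; the substance is identical.
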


\begin{proof}
Let $A \in \pi (K)$. To begin with, let us observe that $L$ is an invertible map as 
$M_n(\reals)$ contains a basis from $S(K)$ (refer Lemma \ref{basis-result-2} above). Then 
for any $B \in S(K), \ A+B \in S(K)$. Now $L(A+ B) = L(A) + L(B) \in S(K)$. Note that 
$L(B) \in S(K)$. Further, for any $C \in S(K)$ there exists a $B \in S(K)$ such that 
$L(B)= C$. Thus, for any $C \in S(K), \ L(A) + C \in S(K)$. It follows from 
Theorem \ref{thm-1} that $L(A) \in \pi(K)$. 

A similar argument works for $L^{-1}$ as well. Since $\pi(K)$ contains a basis 
for $M_n(\reals)$, the desired result follows.
\end{proof} 

\subsection{Semipositive cone of $A$ and its invariance}\hspace{\fill}\\

We discuss in this section, invariance of the semipositive cone of $A$. Wherever possible, 
examples are provided to substantiate our results. 
Given a proper cone $K$ in $\reals^n$ and a square matrix $A$, one can consider the set 
$$\mathcal{K}_A = \{x \in K : Ax \in K\}.$$ This set is a closed convex cone called the 
semipositive cone of $A$. Note that $\mathcal{K}_A = K \cap A^{-1}(K)$, where 
$A^{-1}(K) = \{x : Ax \in K\}$. It can be proved that $\mathcal{K}_A$ is a proper cone if $A$ 
is $K$-semipositive. Recently, in \cite{st}, Sivakumar and Tsatsomeros posed the question 
of when $A$ will leave the cone $\mathcal{K}_A$ invariant. In \cite{ac-sj-vm-1}, the authors 
answered this question affirmatively when $K = \reals^n_+$ and $A$ is semipositive. A complete 
answer in the rank one operator case and a partial answer in the case of an invertible matrix 
were also given. The proofs of these statements can be found in Theorems 2.30 and 2.31 of 
\cite{ac-sj-vm-1}. We discuss yet another case below.

We now have the following theorem.

\begin{theorem}\label{thm-3}
Let $A \in M_n(\reals)$ be $K$-semipositive. Assume that for some $j \geq 2$ 
both $A^j$ and $A^{j+1}$ map $\mathcal{K}_A^\circ$ onto itself. Then, $A$ leaves 
$\mathcal{K}_A$ invariant.
\end{theorem}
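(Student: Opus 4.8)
The plan is to reduce the claim $A(\mathcal{K}_A) \subseteq \mathcal{K}_A$ to an inclusion on the interior $\mathcal{K}_A^\circ$ and then exploit the surjectivity that is built into the hypothesis. Since $A$ is $K$-semipositive, $\mathcal{K}_A$ is a proper cone (as recalled just before the statement), so $\mathcal{K}_A^\circ$ is nonempty and $\overline{\mathcal{K}_A^\circ} = \mathcal{K}_A$. As $A$ is linear and hence continuous, I would first argue that it suffices to prove the interior inclusion $A(\mathcal{K}_A^\circ) \subseteq \mathcal{K}_A^\circ$: the full inclusion then follows by taking closures, $A(\mathcal{K}_A) = A(\overline{\mathcal{K}_A^\circ}) \subseteq \overline{A(\mathcal{K}_A^\circ)} \subseteq \overline{\mathcal{K}_A^\circ} = \mathcal{K}_A$.

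To establish the interior inclusion I would use the factorization $A^{j+1} = A \circ A^j$ together with the two hypotheses. Fix an arbitrary $w \in \mathcal{K}_A^\circ$. Because $A^j$ maps $\mathcal{K}_A^\circ$ \emph{onto} itself, surjectivity produces some $x \in \mathcal{K}_A^\circ$ with $A^j x = w$. Applying $A$ gives $Aw = A(A^j x) = A^{j+1} x$, and since $A^{j+1}$ sends $\mathcal{K}_A^\circ$ into $\mathcal{K}_A^\circ$, we obtain $Aw \in \mathcal{K}_A^\circ$. As $w \in \mathcal{K}_A^\circ$ was arbitrary, this yields $A(\mathcal{K}_A^\circ) \subseteq \mathcal{K}_A^\circ$, which is exactly what the closure argument of the previous paragraph needs.

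The crux of the proof, and the only place the hypothesis is genuinely used, is the observation that the word \emph{onto} gives surjectivity of $A^j$, allowing every interior point $w$ to be written as $A^j x$ with $x \in \mathcal{K}_A^\circ$; after that, passing through $A^{j+1}$ is immediate. I therefore expect the main obstacle to be purely verificational rather than conceptual: confirming that ``maps $\mathcal{K}_A^\circ$ onto itself'' indeed supplies \emph{both} the surjectivity of $A^j$ and the into-property $A^{j+1}(\mathcal{K}_A^\circ) \subseteq \mathcal{K}_A^\circ$ that the argument requires, and checking that $\overline{\mathcal{K}_A^\circ} = \mathcal{K}_A$ holds because $\mathcal{K}_A$ is a proper (hence closed) cone with nonempty interior. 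I would also remark that the argument uses only surjectivity of $A^j$ and the into-property of $A^{j+1}$, so that $j \geq 2$ is not strictly essential to the proof; it merely excludes the degenerate case $j = 1$, in which $A(\mathcal{K}_A^\circ) = \mathcal{K}_A^\circ$ is already the desired conclusion.
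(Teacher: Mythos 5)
Your proposal is correct, but it takes a genuinely different route from the paper. The paper argues by contradiction and leans on its main results: assuming $A(\mathcal{K}_A) \nsubseteq \mathcal{K}_A$, it invokes Theorem \ref{thm-1} to produce a $\mathcal{K}_A$-semipositive $B$ with $A+B \notin S(\mathcal{K}_A)$, then applies the theorem of the alternative (Theorem \ref{alternative}) to get $0 \neq y \in \mathcal{K}_A^{\ast}$ with $\langle (A+B)^t y, z\rangle \leq 0$ for all $z \in \mathcal{K}_A$, and finally tests this against $z = A^j x$ with $x \in \mathcal{K}_A^\circ$, using the hypotheses on $A^j$ and $A^{j+1}$ to make both resulting inner products positive --- a contradiction. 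Your argument is direct and considerably more elementary: surjectivity of $A^j$ on $\mathcal{K}_A^\circ$ plus the factorization $A^{j+1} = A \circ A^j$ immediately give $A(\mathcal{K}_A^\circ) \subseteq \mathcal{K}_A^\circ$, and the closure step $\overline{\mathcal{K}_A^\circ} = \mathcal{K}_A$ (valid because $\mathcal{K}_A$ is a closed convex set with nonempty interior, semipositivity of $A$ guaranteeing the latter) finishes the job. Your route avoids Theorem \ref{thm-1} and the alternative theorem entirely, makes transparent that only the onto-property of $A^j$ and the into-property of $A^{j+1}$ are used, and correctly observes that $j \geq 2$ plays no real role. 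What the paper's approach buys is chiefly expository: this subsection is meant to illustrate Theorem \ref{thm-1} in action, and the contradiction proof does that (at the cost of some delicate steps, e.g.\ justifying that $BA^j x \in \mathcal{K}_A^\circ$ for a merely semipositive $B$, which your proof never has to confront).
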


\begin{proof}
Suppose $A(\mathcal{K}_A) \nsubseteq \mathcal{L}_A$. Then, by 
Theorem \ref{thm-1}, there exists a $\mathcal{K}_A$-semipositive matrix $B$ such that $A+B$ 
is not $\mathcal{K}_A$-semipositive. Therefore, by Theorem \ref{alternative}, the system 
$$(A+B)x \in \mathcal{K}_A^\circ, x \in \mathcal{K}_A$$ has no solution. This means that the 
system $$-(A+B)^ty \in \mathcal{K}_A^{\ast}, 0 \neq y \in \mathcal{K}_A^{\ast}$$ has a solution. 
Therefore, for every $z \in \mathcal{K}_A^{\ast \ast} = \mathcal{K}_A, \ 
\langle -(A+B)^ty,z \rangle \geq 0$ or $\langle (A+B)^ty,z \rangle \leq 0$ for every 
$z \in \mathcal{K}_A$. Let $j$ be a natural number such that both $A^j$ and $A^{j+1}$ map 
$\mathcal{K}_A^\circ$ onto itself. Let $z = A^jx, x \in \mathcal{K}_A^\circ$. Then, 
$\langle y, (A+B)A^jx \rangle = \langle y,A^{j+1}x \rangle + \langle y,BA^j x \rangle \leq 0$. 
Notice that $BA^jx \in \mathcal{K}_A^\circ$. Then, the second inner product is 
positive, as $y \in \mathcal{K}_A^{\ast}$ and $BA^jx \in \mathcal{K}_A^\circ$. The first inner 
product is certainly positive as $A^{j+1}x \in \mathcal{K}_A^\circ$ and 
$0 \neq y \in \mathcal{K}_A^{\ast}$. This contradiction proves the result.
\end{proof}
	
\subsection{A few examples}\hspace*{\fill}\\
 
It is worth pointing out that when $K$ is a proper self-dual cone in $\reals^n$ 
and if $A$ is an invertible matrix such that $A^2(K) \subseteq K$, then 
$A(\mathcal{K}_A) \subseteq \mathcal{K}_A$(see Theorem 2.37, \cite{ac-sj-vm-1}), 
although the converse is not true. In Examples \ref{eg-1} and \ref{eg-2} below, 
we work with the Lorentz cone $\mathcal{L}^n_{+}$ and we shall denote in this case 
the semipositive cone by $\mathcal{L}_A$. Example \ref{eg-1} illustrates that there 
are matrices that are $K$-semipositive, but some power of $A$ is not $K$-semipositive 
and $A$ does not leave the semipositive cone invariant.

\begin{example}\label{eg-1}
Let $A = \begin{bmatrix*}[r]
          1 & -1\\
          1 & 1
         \end{bmatrix*}$ and $K = \mathcal{L}^2_{+}$. Then, $A$ is 
$\mathcal{L}^2_{+}$-semipositive, whereas $A^2$ is not. The cone 
$\mathcal{L}_A = \{x = (x_1,x_2)^t \in \reals^2 : x_1 \geq 0, x_2 \geq 0, x_2^2 \geq x_1^2\}$. 
Taking $u = (0,1)^t \in \mathcal{L}_A$, we see that $Au = (-1,1)^t \notin \mathcal{L}_A$. 
\end{example}

The following is an example of a matrix $A$ for which $A^j$ is $\mathcal{L}_A$-semipositive 
for some $j \geq 1$ (and hence, $A^j$ is $\mathcal{L}^3_{+}$-semipositive for all such $j \geq 1$), 
but $A$ does not leave the semipositive cone $\mathcal{L}_A$ invariant. Let us recall a 
definition before proceeding with the example.

\begin{definition}
Given a proper cone $K$ in $\reals^n$, we say that $A \in M_n(\reals)$ has 
\begin{itemize}
\item the $K$-Perron-Frobenius property if $\rho(A) > 0, \ \rho(A) \in \sigma(A)$ and there is 
an eigenvector $x \in K$ corresponding to $\rho(A)$.
\item the strong $K$-Perron-Frobenius property if, in addition to having the $K$-Perron-Frobenius 
property, $\rho(A)$ is a simple eigenvalue such that $\rho(A) > |\lambda|$ for all 
$|\lambda| \in \sigma(A), \ |\lambda| \neq \rho(A)$, as well as there is an eigenvector 
$x \in K^\circ$ corresponding to $\rho(A)$.
\end{itemize}
\end{definition}

\begin{example}\label{eg-2}
Let $A = \begin{bmatrix*}[r]
          1 & 1 & -1\\
          -1 & 1 & 1\\
          0 & 2 & 2
         \end{bmatrix*}$. It can be easily seen that this matrix is $\mathcal{L}^3_{+}$-semipositive 
and its associated semipositive cone $\mathcal{L}_A 
= \{x = (x_1,x_2,x_3)^t \in \mathcal{L}^3_{+}: 2(x_2^2 +x_3^2 - x_1^2) + 8x_2x_3 + 4x_1x_3 \geq 0\}$ 
is a proper cone in $\reals^3$. The matrix $A$ has the following properties.
\begin{itemize}
\item It is $\mathcal{L}_A$-semipositive, as $x^\circ = (0,0,1)^t \in \mathcal{L}_A^\circ$ and 
$Ax^\circ = (-1,1,2)^t \in \mathcal{L}_A^\circ$.
\item The spectral radius of $A, \ \rho(A) = 3.13040$ is a dominant eigenvalue of $A$ with a 
corresponding eigenvector $x = (-0.204, 0.565,1)^t$ in the interior of $\mathcal{L}_A$ 
and $A^t$ has as eigenvector $y = (-0.275, 0.586, 0.762)^t$ in the interior of 
$\mathcal{L}_A^{\ast}$, corresponding to the dominant eigenvalue $\rho(A) = 3.13040$. Therefore, 
$A$ has the strong Perron-Frobenius property relative to $\mathcal{L}_A$ and so is eventually $\mathcal{L}_A$-positive (see Theorem 7, \cite{kas-tsat}; also  see the references cited therein 
for details about the Perron-Frobenius property). Therefore, there exists a natural number $j_0$ 
such that for every $j \geq j_0, \ A^j$ is $\mathcal{L}_A$-positive. 
\item It is also eventually positive over $\mathcal{L}^3_{+}$. 
\end{itemize}

However, for $x^\circ = (1,-1,2)^t \in \mathcal{L}_A, \ Ax^\circ = (-2,0,2)^t$, which is not 
an element of $\mathcal{L}_A$. Thus, $A$ does not leave the cone $\mathcal{L}_A$ invariant. Note
that $A^2$ cannot be $\mathcal{L}^3_{+}$-nonnegative. In fact, taking the above vector 
$x^\circ \in \mathcal{L}^3_{+}$, we see that $A^2 (x^\circ) \notin \mathcal{L}^3_{+}$.
\end{example}

The above example illustrates that semipositivity of a matrix with respect to a proper 
cone $K$ together with strong Perron-Frobenius property of $A$ with respect to the 
semipositive cone $\mathcal{K}_A$ does not imply that $A(\mathcal{K}_A) \subseteq \mathcal{K}_A$. 
Other cone invariance properties of the matrix $A$ in the above example can be found in 
Examples 5 and 10 of \cite{kas-tsat}. It now follows from Theorem \ref{thm-3} that there 
is no $j$ for which both $A^j$ and $A^{j+1}$ map $\mathcal{L}_A^\circ$ 
onto itself. A direct proof of this is not obvious. We end with a few remarks. Stern and 
Tsatsomeros proved that if a square matrix $A$ is semipositive as well as has the 
$\mathcal{Z}$-property with respect to a proper cone $K$, 
then $(A + \epsilon I)^{-1} \in \pi(K)$ for all $\epsilon \geq 0$. Moreover, the determinant 
of $A$ is positive and if $A$ is also symmetric, then $A$ is positive definite 
(Corollary 1, \cite{gt}). The following examples illustrate that in Theorem \ref{thm-1}, one cannot 
restrict the matrix $B$ to positive diagonal matrices or even semipositive $M$-matrices. 

\begin{example}\label{eg-3}
Consider $A = \begin{bmatrix*}[r]
               1 & 0\\
               1 & -1
              \end{bmatrix*}$, which is not nonnegative. However, for any $\alpha > 0$, the 
matrix $A + \alpha I$ is semipositive, as for $u = (x_1,0)^t \in \reals^2$ with $x_1 > 0$, 
$(A + \alpha I)u = ((1+\alpha)x_1,x_1)^t \in (\reals^2_+)^\circ$. The same calculation proves 
that $A + B$ is semipositive for any positive diagonal matrix $B$, 
although $A$ is not nonnegative. Now let $B$ be a semipositive $M$-matrix of the form 
$\begin{bmatrix*}[r]
 \alpha & p_1\\
 p_2 & \beta
\end{bmatrix*}$, with 
$\alpha > 0, \beta > 0, p_1, p_2 \leq 0$. Let $x = (x_1,x_2)^t \in (\reals^2_+)^\circ$ be 
such that $Bx \in (\reals^2_+)^\circ$. Then, $\alpha x_1 + p_1 x_2 > 0, p_2 x_1 + \beta x_2 > 0$. 
If $x_1 \leq x_2$, then 
take $A = \begin{bmatrix*}[r]
           1 & 0\\
           1 & -1
          \end{bmatrix*}$ and if $x_1 > x_2$, then take $A = \begin{bmatrix*}[r]
                                                              1 & 0\\
                                                              -1 & 1
                                                             \end{bmatrix*}$. In both these cases, 
$A$ is not nonnegative, but $A + B$ is semipositive. 
\end{example}

As a final remark, it is worth pointing out that several interesting results on eventual 
cone invariance have been obtained recently by Kasigwa and Tsatsomeros \cite{kas-tsat}.

\vspace{1cm}
\noindent
\textbf{Acknowledgements:} The first author acknowledges the SERB, Government of India, 
for partial support in the form of a grant (Grant No. ECR/2017/000078). The third author 
acknowledges the Council of Scientific and Industrial Research (CSIR), India, for support 
in the form of Junior and Senior Research Fellowships (Award No. 09/997(0033)/2015-EMR-I).

\bibliographystyle{amsplain}

\end{document}